\providecommand{\U}[1]{\protect\rule{.1in}{.1in}}
\newtheorem{theorem}{Theorem}
\newtheorem{definition}[theorem]{Definition}
\newtheorem{example}[theorem]{Example}
\newtheorem{remark}[theorem]{Remark}
\newenvironment{proof}[1][Proof]{\noindent\textbf{#1.} }{\ \rule{0.5em}{0.5em}}
\begin{document}

\title{Discrete semiclassical orthogonal polynomials of class one}
\author{Diego Dominici \thanks{e-mail: dominicd@newpaltz.edu}\\Department of Mathematics \\State University of New York at New Paltz \\1 Hawk Dr.\\New Paltz, NY 12561-2443 \\USA 
\and Francisco Marcell{\'a}n \thanks{e-mail: pacomarc@ing.uc3m.es}\\Departamento de Matem\'aticas \\Universidad Carlos III de Madrid \\Escuela Polit\'ecnica Superior \\Av. Universidad 30 \\28911 Legan\'es\\Spain}
\maketitle

\begin{abstract}
We study the discrete semiclassical orthogonal polynomials of class $s=1$. By
considering all possible solutions of the Pearson equation, we obtain five
canonical families. We also consider limit relations between these and other
families of orthogonal polynomials.

\end{abstract}

\strut Keywords: discrete orthogonal polynomials, Pearson equation, discrete
semiclassical polynomials, Laguerre-Freud equations, Painlev\'{e} equations

\strut MSC-class: 33C47 (Primary), 34M55, 33E17, 42C05 (Secondary)

\section{Introduction}

Discrete orthogonal polynomials with respect to uniform lattices have
attracted the interest of researchers from many points of view
\cite{MR1149380}. A first approach comes from the discretization of
hypergeometric second order linear differential equations and thus the
classical discrete orthogonal polynomials Charlier, Krawtchouk, Meixner and
Hahn appear in a natural way. As a consequence of the symmetrization problem
for the above second order difference equations, you can deduce the (discrete)
measure with respect to such polynomials are orthogonal. This yields the
so-called Pearson equation that the measure satisfies.

In the last twenty years, new families of discrete orthogonal polynomials have
been considered in the literature taking into account the so-called canonical
spectral transformations of the orthogonality measure. In particular, when you
add mass points to the discrete measure (Uvarov transformation) the sequences
of orthogonal polynomials with respect to the new measure have been studied
extensively (see \cite{MR813269}, \cite{MR1353079}, \cite{MR1379116}, among
others). When you multiply the discrete measure by a polynomial (Christoffel
transformation), some results are known \cite{MR1939588}.

From a structural point of view, some effort has been done in order to
translate to the discrete case the well stated theory of semiclassical
orthogonal polynomials (see \cite{MR1270222}). In particular,
characterizations of such polynomials in terms of structure relations of the
first and second kind, as well as discrete holonomic equations (second order
linear difference equations with polynomial coefficients of fixed degree and
where the degree of the polynomial appears as a parameter) have been done
\cite{MR1665164}. Linear spectral perturbations of semiclassical linear
functionals have been studied in the Uvarov case \cite{MR1467146}.

On the other hand, we must point out that the linear canonical spectral
transformations (Christoffel, Uvarov, Geronimus) of classical discrete
orthogonal polynomials yields discrete semiclassical orthogonal polynomials.
But, as a first step, it remains open the problem of classification of
discrete semiclassical linear functional of class one. The symmetric discrete
semiclassical linear functional of class one have been described in
\cite{MR2457103}. Notice that the classification of $D$- semiclassical linear
functional of class one was done in \cite{MR1186737} and for class two in
\cite{MR2914891}.

The aim of this work is to provide a constructive method of $D_{w}%
$-semiclassical orthogonal polynomials based on the Pearson equation that the
corresponding linear functional satisfies. We will focus our attention on the
classification of $D_{1}$-semiclassical linear functionals of class $s=1.$ In
such a way, new families of linear functionals appear. Notice that an
alternative method is based on the Laguerre-Freud equations satisfied by the
coefficients of the three term recurrence relations associated with these
orthogonal polynomials. Their complexity increases with the class of the
linear functional and the solution is cumbersome. Basic references concerning
this approach are \cite{MR1662690} as well as \cite{MR2457103}.

The structure of the manuscript is as follows. Section 2 deals with the basic
definitions and the theoretical background we will need in the sequel. In
Section 3 we describe the $D_{1}$-classical linear functionals, as $D_{1}%
$-semiclassical of class $s=0.$ It will be very useful in the sequel taking
into account that most of the semiclassical linear functionals of class $s=1$
are related to them. Indeed, in Section 4, the classification of such
semiclassical linear functionals is given. Some of them are not known in the
literature, as far as we know. Finally, in Section 5, limit relations in terms
of their parameters for semi-classical orthogonal polynomials are studied.

\section{Preliminaries and basic background}

\begin{definition}
Let $\left\{  \mu_{n}\right\}  _{n\geq0} $ be a sequence of complex numbers
and let $\mathcal{L}$ be a linear complex valued function defined on the
linear space $\mathbb{P}$ of polynomials with complex coefficients by
\[
\left\langle \mathcal{L},x^{n}\right\rangle =\mu_{n}.
\]
Then, $\mathcal{L}$ is called the moment functional determined by the moment
sequence $\left\{  \mu_{n}\right\}  _{n\geq0} $ and $\mu_{n}$ is called the
moment of order $n$.
\end{definition}

Given a moment functional $\mathcal{L},$ the formal Stieltjes function of
$\mathcal{L}$ is defined by%
\[
S_{\mathcal{L}}(z)=-\sum\limits_{n=0}^{\infty}\frac{\mu_{n}}{z^{n+1}}.
\]
For any moment functional $\mathcal{L}$ and any polynomial $q(x),$ we define
the moment functional $q\mathcal{L}$ by%
\[
\left\langle q\mathcal{L},P\right\rangle =\left\langle \mathcal{L}%
,qP\right\rangle ,\quad P\in\mathbb{P}.
\]

In the sequel, we will denote the set of nonnegative integers by
$\mathbb{N}_{0}.$

\begin{definition}
Let $\mathcal{L}$ be the linear functional associated with the moment sequence
$\left\{  \mu_{n}\right\}  _{n\geq0} $ and
\[
\Delta_{n}=\det%
\begin{bmatrix}
\mu_{0} & \mu_{1} & \cdots & \mu_{n}\\
\mu_{1} & \mu_{2} & \cdots & \mu_{n+1}\\
\vdots & \vdots & \ddots & \vdots\\
\mu_{n} & \mu_{n+1} & \cdots & \mu_{2n}%
\end{bmatrix}
.
\]
Then, $\mathcal{L}$ is said to be regular/quasidefinite (resp. positive
definite) if $\Delta_{n}\neq0$ (resp. $>0$) for all $n\in\mathbb{N}_{0}.$
\end{definition}

\begin{definition}
A sequence of polynomials $\left\{  P_{n}\left(  x\right)  \right\}  _{n\geq
0},$ $\deg\left(  P_{n}\right)  =n,$ is said to be an orthogonal polynomial
sequence with respect to a regular linear functional $\mathcal{L},$ if there
exists a sequence of nonzero real numbers $\{\zeta_{n}\}_{n\geq0}$ such that
\[
\left\langle \mathcal{L},P_{k}P_{n}\right\rangle =\zeta_{n}\delta_{k,n},\quad
k,n\in\mathbb{N}_{0}.
\]

\end{definition}

If $\zeta_{n}=1,$ then $\left\{  p_{n}\left(  x\right)  \right\}  _{n\geq0}$
is said to be an orthonormal polynomial sequence. Notice that if the linear
functional is positive definite there exists a unique sequence of orthonormal
polynomials assuming the leading coefficient is a positive real number.

If the leading coefficient of $P_{n}(x)$ is $1$ for every $n\in\mathbb{N}_{0}%
$, then the sequence is said to be a monic orthogonal sequence (MOPS, in short).

\begin{theorem}
Let $\left\{  b_{n}\right\}  _{n\geq0} $ and $\left\{  \gamma_{n}\right\}
_{n\geq0}$ with $\gamma_{n}\neq0$ for every $n\in\mathbb{N}_{0}$ be arbitrary
sequences of complex numbers and let $\left\{  P_{n}\left(  x\right)
\right\}  $ a sequence of monic polynomials defined by the three term
recurrence formula (TTRR)
\begin{equation}
P_{n+1}(x)=\left(  x-b_{n}\right)  P_{n}(x)-\gamma_{n}P_{n-1}(x),
\label{3term}%
\end{equation}
with $P_{-1}=0$ and $P_{0}=1.$ Then, there is a unique linear functional
$\mathcal{L}$ such that $\mathcal{L}\left(  1\right)  =\gamma_{0}$ and
\[
\left\langle \mathcal{L}, P_{k}\left(  x\right)  P_{n}\left(  x\right)
\right\rangle =\gamma_{0}\gamma_{1}\cdots\gamma_{n}\delta_{k,n}.
\]

\end{theorem}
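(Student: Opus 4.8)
The plan is to use that $\left\{  P_{n}\right\}  _{n\geq0}$ is a basis of $\mathbb{P}$, since $\deg\left(  P_{n}\right)  =n$; consequently a linear functional on $\mathbb{P}$ is completely determined by its values on the $P_{n}$, and these values may be prescribed arbitrarily. I would first dispose of \emph{uniqueness}: if $\mathcal{L}$ satisfies the asserted relation, then taking $k=0$ forces $\left\langle \mathcal{L},P_{n}\right\rangle =\left\langle \mathcal{L},P_{0}P_{n}\right\rangle =\gamma_{0}\gamma_{1}\cdots\gamma_{n}\delta_{0,n}=\gamma_{0}\delta_{n,0}$, so $\mathcal{L}$ is prescribed on the basis $\left\{  P_{n}\right\}$ and is unique; in particular $\mathcal{L}\left(  1\right)  =\left\langle \mathcal{L},P_{0}\right\rangle =\gamma_{0}$.

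For \emph{existence}, I would \emph{define} $\mathcal{L}$ by $\left\langle \mathcal{L},P_{0}\right\rangle =\gamma_{0}$ and $\left\langle \mathcal{L},P_{n}\right\rangle =0$ for $n\geq1$, extended by linearity; then $\mathcal{L}\left(  1\right)  =\gamma_{0}$ is immediate, and what remains is to verify $\left\langle \mathcal{L},P_{k}P_{n}\right\rangle =\gamma_{0}\cdots\gamma_{n}\delta_{k,n}$. Since $P_{k}P_{n}$ is symmetric in $k,n$, it suffices to establish, for every $m\in\mathbb{N}_{0}$, the statement $H(m)$: \emph{for all $n\geq m$, $\left\langle \mathcal{L},P_{m}P_{n}\right\rangle =\gamma_{0}\gamma_{1}\cdots\gamma_{m}\,\delta_{m,n}$}. (This gives the theorem: for $k\leq n$ it is $H(k)$, for $k>n$ use symmetry and $H(n)$, and the two forms of the product coincide because the Kronecker delta forces $k=n$ whenever the value is nonzero.) I would prove $H(m)$ by induction on $m$, the base case $H(0)$ being the definition of $\mathcal{L}$ together with $P_{0}=1$.

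For the inductive step, assuming $H(0),\dots,H(m)$ and fixing $n\geq m+1$, I would use the recurrence (\ref{3term}) twice: first as $P_{m+1}=xP_{m}-b_{m}P_{m}-\gamma_{m}P_{m-1}$ (with $P_{-1}=0$ when $m=0$), then as $xP_{n}=P_{n+1}+b_{n}P_{n}+\gamma_{n}P_{n-1}$, together with $xP_{m}P_{n}=P_{m}\left(  xP_{n}\right)$, to get
\[
\left\langle \mathcal{L},P_{m+1}P_{n}\right\rangle =\left\langle \mathcal{L},P_{m}P_{n+1}\right\rangle +\left(  b_{n}-b_{m}\right)  \left\langle \mathcal{L},P_{m}P_{n}\right\rangle +\gamma_{n}\left\langle \mathcal{L},P_{m}P_{n-1}\right\rangle -\gamma_{m}\left\langle \mathcal{L},P_{m-1}P_{n}\right\rangle .
\]
Since $n\geq m+1$, each bracket on the right is of the form $\left\langle \mathcal{L},P_{m}P_{j}\right\rangle$ with $j\geq m$ or $\left\langle \mathcal{L},P_{m-1}P_{j}\right\rangle$ with $j\geq m-1$, so $H(m)$ and $H(m-1)$ evaluate all of them; everything vanishes except $\gamma_{n}\left\langle \mathcal{L},P_{m}P_{n-1}\right\rangle$, which is $0$ when $n>m+1$ and equals $\gamma_{n}\gamma_{0}\cdots\gamma_{m}=\gamma_{0}\cdots\gamma_{m+1}$ when $n=m+1$ — that is, exactly $H(m+1)$.

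The step I expect to be the real (albeit mild) obstacle is precisely this index bookkeeping, and it is also why the induction must run on the \emph{smaller} index $m$: the hypothesis $n\geq m+1$ is exactly what keeps every inner product produced by the two applications of the recurrence inside the range already controlled by $H(m)$ and $H(m-1)$. An induction on $n$ instead gets stuck at the near-diagonal term $\left\langle \mathcal{L},P_{n}P_{n+1}\right\rangle$, which (\ref{3term}) alone does not determine. Once $H(m)$ holds for all $m$, existence is complete; I would also remark that $\left\langle \mathcal{L},P_{n}^{2}\right\rangle =\gamma_{0}\cdots\gamma_{n}\neq0$ shows $\mathcal{L}$ is regular with $\left\{  P_{n}\right\}$ as its MOPS, although this is beyond the statement.
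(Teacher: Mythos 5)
Your proof is correct. Note that the paper itself gives no argument here: its ``proof'' is only a pointer to Chihara's book (Theorem 4.4 of \cite{MR0481884}), so you have supplied the content that the paper outsources. Your argument is essentially the classical Favard-type construction found there, with one organizational difference worth noting: Chihara defines $\mathcal{L}$ by the same normalization $\left\langle \mathcal{L},P_{0}\right\rangle =\gamma_{0}$, $\left\langle \mathcal{L},P_{n}\right\rangle =0$ for $n\geq1$, but then runs the induction on the quantities $\left\langle \mathcal{L},x^{k}P_{n}\right\rangle$, showing these vanish for $0\leq k<n$ and that $\left\langle \mathcal{L},x^{n}P_{n}\right\rangle =\gamma_{n}\left\langle \mathcal{L},x^{n-1}P_{n-1}\right\rangle$, whereas you work directly with the pairings $\left\langle \mathcal{L},P_{m}P_{n}\right\rangle$ and induct on the smaller index $m$. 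The two are equivalent (since $\{1,x,\dots,x^{m}\}$ and $\{P_{0},\dots,P_{m}\}$ span the same space), but your version has the advantage that the target identity appears verbatim as the inductive statement $H(m)$, at the cost of the slightly heavier bookkeeping you flag: applying the TTRR at both indices and checking that every resulting term stays in the range covered by $H(m)$ and $H(m-1)$. Your treatment of the degenerate cases ($P_{-1}=0$ when $m=0$), of uniqueness via the basis property of $\{P_{n}\}_{n\geq0}$, and of the apparent asymmetry of $\gamma_{0}\cdots\gamma_{n}\delta_{k,n}$ in $k$ and $n$ is all sound, and your closing remark that $\left\langle \mathcal{L},P_{n}^{2}\right\rangle =\gamma_{0}\cdots\gamma_{n}\neq0$ makes $\mathcal{L}$ regular is exactly the ``moreover'' clause of Chihara's statement.
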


\begin{proof}
See \cite[Theorem 4.4]{MR0481884}
\end{proof}

If the linear functional is positive definite and $\left\{  p_{n}\left(
x\right)  \right\}  _{n\geq0} $ is the corresponding orthonormal polynomial
sequence, then the above TTRR formula becomes
\[
a_{n+1}p_{n+1}(x)=\left(  x-b_{n}\right)  p_{n}(x)-a_{n}p_{n-1}(x),
\]
where $a_{n}$ is a real number and $a_{n}^{2}=\gamma_{n}.$

\begin{definition}
Let $\mathcal{L}$ be a linear functional and $U^{\ast}:\mathbb{P\rightarrow
P}$ be a linear operator. The linear functional $U\mathcal{L}$ is defined by%
\[
\left\langle U\mathcal{L},P\right\rangle =-\left\langle \mathcal{L},U^{\ast
}P\right\rangle ,\quad P\in\mathbb{P}.
\]

\end{definition}

\begin{example}
If $U$ is the standard derivative operator $D$, we have $U^{\ast}=U=D.$
\end{example}

\begin{definition}
A regular linear functional $\mathcal{L}$ is called $U$-semiclassical if it
satisfies the Pearson equation $U\left(  \phi\mathcal{L}\right)
+\psi\mathcal{L}=0$ or, equivalently,%

\[
\left\langle U\left(  \phi\mathcal{L}\right)  +\psi\mathcal{L},P\right\rangle
=0,\quad P\in\mathbb{P},
\]
where $\phi,\psi$ are two polynomials, and $\phi$ is monic. The corresponding
orthogonal sequence $\left\{  P_{n}\left(  x\right)  \right\}  _{n\geq0} $ is
called $U$-semiclassical.
\end{definition}

In the literature, semiclassical linear functionals with respect to different
choices of operators have been studied. In particular, if $U=D$ (the standard
derivative operator), the theory of $D$-semiclassical linear functionals has
been exhaustively studied by P. Maroni and co-workers (see \cite{MR1270222}
for an excellent survey on this topic).

If $U=D_{\omega},$ where
\[
D_{\omega}f(x)=\frac{f(x+\omega)-f(x)}{\omega},\quad\omega\neq0,
\]
a regular linear functional $\mathcal{L}$ is said to be $D_{\omega}%
$-semiclassical if there exist polynomials $\phi,\psi,$ where $\phi$ is monic
and deg$\psi\geq1$ such that $D_{\omega}\left(  \phi\mathcal{L}\right)
+\psi\mathcal{L}=0.$

Notice that
\[
D_{1}f(x)=f(x+1)-f(x)=\Delta f(x),
\]%
\[
D_{-1}f(x)=f(x)-f(x-1)=\nabla f(x),
\]
are the forward and backward difference operators, respectively, and
\[
\underset{\omega\rightarrow0}{\lim}D_{\omega}f(x)=Df(x)=f^{\prime}(x).
\]
If $U=D_{\omega},$ we define $U^{\ast}=D_{-\omega}.$ With this definition, we
have $\Delta^{\ast}=\nabla$ and when $\omega\rightarrow0$ we recover the
identity $U^{\ast}=D=U.$

The concept of class of a $D_{\omega}$-semiclassical linear functional plays a
central role in order to give a constructive theory of such linear functionals.

\begin{definition}
If $\mathcal{L}$ is a $D_{\omega}$- semiclassical linear functional, then the
class $s$ of $\mathcal{L}$ is defined by%
\[
s=\underset{\phi,\psi}{~\min}\max\left\{  \deg\phi-2,\deg\psi-1\right\}  .
\]
among all polynomials $\phi,\psi$ such that the Pearson equation holds. Notice
that the class $s$ is always nonnegative.
\end{definition}

For any complex number $c,$ we introduce the linear application $\theta
_{c}:\mathbb{P\rightarrow P}$ defined by
\[
\theta_{c}(p)(x)=\frac{p(x)-p(c)}{x-c}.
\]
We have the following result \cite{MR1270222}.

\begin{theorem}
The regular linear functional $\mathcal{L}$ satisfying the Pearson equation
\[
D_{\omega}\left(  \phi\mathcal{L}\right)  +\psi\mathcal{L}=0
\]
is of class $s$ if and only if%
\[
\prod_{c\in Z(\phi)}\left(  |\psi(c-\omega)+(\theta_{c}\phi)(c-\omega
)|+|\left\langle \mathcal{L},\theta_{c-\omega}(\psi+\theta_{c}\phi
)\right\rangle |\right)  >0,
\]
where $Z(\phi)$ denotes the set of zeros of the polynomial $\phi(x)$.

When there exists $c\in{Z(\phi)}$ such that%
\[
\psi(c-\omega)+(\theta_{c}\phi)(c-\omega)=\left\langle \mathcal{L}%
,\theta_{c-\omega}(\psi+\theta_{c}\phi)\right\rangle =0,
\]
the Pearson equation becomes
\[
D_{\omega}\left[  (\theta_{c}\phi)\mathcal{L}\right]  +\left[  \theta
_{c-\omega}(\psi+\theta_{c}\phi)\right]  \mathcal{L}=0.
\]

\end{theorem}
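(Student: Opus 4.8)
The plan is to characterize when the class of $\mathcal{L}$ can be lowered, i.e.\ when a given admissible pair $(\phi,\psi)$ is \emph{not} minimal, and to show this happens precisely when the displayed product vanishes. The starting observation is that any other admissible pair must arise from $(\phi,\psi)$ by removing a common factor: if $(\tilde\phi,\tilde\psi)$ also satisfies $D_\omega(\tilde\phi\mathcal{L})+\tilde\psi\mathcal{L}=0$ with $\deg\tilde\phi<\deg\phi$, then subtracting the two Pearson equations gives $D_\omega((\phi-\tilde\phi)\mathcal{L})+(\psi-\tilde\psi)\mathcal{L}=0$, and by regularity of $\mathcal{L}$ one deduces that $\phi$ and $\tilde\phi$ differ by a linear factor $(x-c)$ with $c\in Z(\phi)$. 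So the whole question reduces to: for a fixed zero $c$ of $\phi$, when can one divide the Pearson equation by $(x-c)$?

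First I would write $\phi(x)=(x-c)(\theta_c\phi)(x)$ and compute $D_\omega(\phi\mathcal{L})$ using a discrete Leibniz-type identity for $D_\omega$ applied to the product $(x-c)\cdot(\theta_c\phi)\mathcal{L}$. The point is to isolate the obstruction to writing $D_\omega(\phi\mathcal{L})+\psi\mathcal{L}$ in the form $(x-c)\big[D_\omega((\theta_c\phi)\mathcal{L})+\chi\mathcal{L}\big]$ for some polynomial $\chi$. Expanding $D_\omega[(x-c+\omega-\omega)\,(\theta_c\phi)\mathcal{L}]$ — i.e.\ writing $x-c = (x-c-\omega)+\omega$ or, more conveniently, exploiting that multiplication by $(x-c)$ and the shift commute in a controlled way — one finds that $(\phi\mathcal{L})$-term contributes $(x-c-\omega)D_\omega((\theta_c\phi)\mathcal{L})$ plus a $(\theta_c\phi)$-evaluated correction; collecting everything, the combination $\psi+\theta_c\phi$ appears naturally, and one needs $\psi+\theta_c\phi$ to be divisible by $(x-c-\omega)$ in the functional sense. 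The divisibility of the \emph{polynomial part} is governed by the value $\psi(c-\omega)+(\theta_c\phi)(c-\omega)$, while the divisibility as a \emph{functional} equation requires in addition that the functional $\theta_{c-\omega}(\psi+\theta_c\phi)\,\mathcal{L}$ be consistent — equivalently that $\langle\mathcal{L},\theta_{c-\omega}(\psi+\theta_c\phi)\rangle$ vanish, since this scalar is exactly the obstruction coming from the constant term that $\theta_{c-\omega}$ cannot absorb. When both quantities vanish, one reads off that the reduced pair is $(\theta_c\phi,\ \theta_{c-\omega}(\psi+\theta_c\phi))$, which is the displayed reduced Pearson equation.

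Having established that the class drops at $c$ iff both factors in the $c$-th term vanish, the equivalence with the product condition is immediate: the product is positive iff for \emph{every} zero $c$ at least one of the two absolute values is nonzero, i.e.\ iff no reduction is possible at any zero, i.e.\ iff $(\phi,\psi)$ is already of minimal class $s$. One should also note the edge case where $\phi$ has a repeated zero or where $\theta_c\phi$ drops the degree by more than one only after several reductions — but since each reduction strictly lowers $\deg\phi$ and the process terminates, and since the product ranges over $Z(\phi)$ as a set, the argument is unaffected; a brief remark suffices.

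The main obstacle will be the bookkeeping in the discrete Leibniz computation: unlike the continuous case, $D_\omega$ does not satisfy a clean product rule, and one must be careful with the shifted arguments $c-\omega$ versus $c$, and with the direction of the shift when transferring multiplication-by-$(x-c)$ across $D_\omega$ inside the functional pairing. Getting the two obstruction quantities to come out exactly as $\psi(c-\omega)+(\theta_c\phi)(c-\omega)$ and $\langle\mathcal{L},\theta_{c-\omega}(\psi+\theta_c\phi)\rangle$ — rather than some equivalent but differently-shifted expressions — is where the care is needed; everything else is a routine consequence of regularity and the definition of class.
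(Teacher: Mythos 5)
The paper offers no proof of this theorem; it is quoted from Maroni's survey (the $D_{\omega}$ version is due to Abdelkarim--Maroni), so your proposal has to stand on its own. The heart of your plan --- fixing $c\in Z(\phi)$, asking when the Pearson equation can be divided by $(x-c)$, and identifying the two obstructions as the scalar $\psi(c-\omega)+(\theta_{c}\phi)(c-\omega)$ (divisibility of the polynomial part) and the moment $\left\langle \mathcal{L},\theta_{c-\omega}(\psi+\theta_{c}\phi)\right\rangle$ (the constant term the functional equation cannot absorb) --- is the standard argument and is correct; your reduced pair $\bigl(\theta_{c}\phi,\ \theta_{c-\omega}(\psi+\theta_{c}\phi)\bigr)$ is the right one. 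For the bookkeeping you worry about: the identity to use is $D_{-\omega}\bigl[(x-c+\omega)p\bigr]=(x-c)D_{-\omega}p+p$, which after dualizing with $\left\langle D_{\omega}\mathcal{M},p\right\rangle =-\left\langle \mathcal{M},D_{-\omega}p\right\rangle$ gives $D_{\omega}(\phi\mathcal{L})=(x-c+\omega)\,D_{\omega}\bigl((\theta_{c}\phi)\mathcal{L}\bigr)+(\theta_{c}\phi)\mathcal{L}$. Hence the linear factor that must divide $\psi+\theta_{c}\phi$ is $x-c+\omega$ (root at $c-\omega$), not $x-c-\omega$ as written in your sketch; this is only a sign slip, since the conditions you ultimately state are the correct ones, and the second condition is indeed $\left\langle \mathcal{M},1\right\rangle =0$ for $\mathcal{M}=D_{\omega}\bigl((\theta_{c}\phi)\mathcal{L}\bigr)+\theta_{c-\omega}(\psi+\theta_{c}\phi)\mathcal{L}$, using $\left\langle D_{\omega}(\cdot),1\right\rangle =0$.

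The genuine gap is in your opening step. You claim that if $(\tilde{\phi},\tilde{\psi})$ is another admissible pair with $\deg\tilde{\phi}<\deg\phi$, then subtracting the two Pearson equations and invoking regularity shows that $\phi$ and $\tilde{\phi}$ differ by a factor $(x-c)$ with $c\in Z(\phi)$. Subtraction only yields $D_{\omega}\bigl((\phi-\tilde{\phi})\mathcal{L}\bigr)+(\psi-\tilde{\psi})\mathcal{L}=0$, and $\phi-\tilde{\phi}$ bears no useful relation to either $\phi$ or $\tilde{\phi}$; nothing in this forces $\tilde{\phi}\mid\phi$. What you actually need is the structural lemma that the admissible pairs are all polynomial multiples of a unique minimal pair; its proof eliminates $D_{\omega}\mathcal{L}$ between the two Pearson equations to obtain a relation of the form $\chi\mathcal{L}=0$ for an explicit polynomial $\chi$ built from $\phi,\psi,\tilde{\phi},\tilde{\psi}$ and their shifts, and then uses that $\chi\mathcal{L}=0$ with $\mathcal{L}$ regular forces $\chi=0$, whence a polynomial identity and the divisibility. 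Without this lemma the direction ``product positive $\Rightarrow$ the class is $s$'' is not established: you have only shown that no single linear factor of $\phi$ can be stripped, not that no lower-degree admissible pair of a different shape exists. Your closing remark about repeated zeros also glosses over a real point: after one reduction the relevant zero set is $Z(\theta_{c}\phi)$, not $Z(\phi)$, so the ``class $s$ iff the product over $Z(\phi)$ is positive'' statement requires the (standard, but not free) fact that reducibility at distinct zeros can be tested independently on the original pair.
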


\begin{remark}
When $s=0$, the $D_{\omega}$-classical orthogonal polynomials appear (see
\cite{MR1464669}). When $\omega=1$ several characterizations of classical
orthogonal polynomials have been done in \cite{MR1340932}. Indeed, we explain
with more detail in the next section the main characteristics of these
polynomials and their corresponding linear functionals.

The $D_{1}$-semiclassical linear functionals have been studied by F.
Marcell\'{a}n-Salto and they are characterized following the same ideas as in
the $D$ case. Maroni-Mejri deduced the Laguerre-Freud equations for the
coefficients of the TTRR of $D_{w}$- semiclassical orthogonal polynomials of
class $s=1$. In the symmetric case, i.e., the moments of odd order vanish,
they deduce the explicit values of such coefficients and the integral
representations of the corresponding linear functionals are given.
\end{remark}

On the other hand, the Pearson equation yields a difference equation for the
moments of the linear functional and, as consequence, we get a linear
difference equation with polynomials coefficients satisfied by the Stieltjes
function associated with the linear functional. Indeed,

\begin{theorem}
If $\mathcal{L}$ is a $D_{\omega}$-semiclassical moment functional, then the
formal Stieltjes function of $\mathcal{L}$ satisfies a non homogeneous first
order linear difference equation%
\[
\phi\left(  z\right)  D_{\omega} S_{\mathcal{L}}(z)=a(z)S_{\mathcal{L}%
}(z)+b(z),
\]
where $a(z)$ and $b(z)$ are polynomials depending on $\phi$ and $\psi$, with
$\deg\left(  a\right)  \leq s+1$ and $\deg(b)\leq s.$
\end{theorem}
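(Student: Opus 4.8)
The plan is to realise $S_{\mathcal{L}}$ as the evaluation of $\mathcal{L}$ on the Cauchy kernel and to push the Pearson equation through that evaluation. As a formal Laurent series in $z^{-1}$ one has
\[
S_{\mathcal{L}}(z)=\left\langle \mathcal{L}_{x},\frac{1}{x-z}\right\rangle ,
\]
the subscript recording that $\mathcal{L}$ acts on the variable $x$; this is just the identity $\frac{1}{x-z}=-\sum_{n\geq 0}x^{n}z^{-n-1}$ tested term by term. More generally, for a polynomial $g$ the splitting $g(x)=g(c)+(x-c)(\theta _{c}g)(x)$ gives
\[
\left\langle \mathcal{L}_{x},\frac{g(x)}{x-c}\right\rangle =g(c)\,S_{\mathcal{L}}(c)+\left\langle \mathcal{L}_{x},(\theta _{c}g)(x)\right\rangle ,
\]
where the last term is a polynomial in $c$ of degree at most $\deg g-1$, its coefficients being explicit linear combinations of $\mu _{0},\dots ,\mu _{\deg g-1}$. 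I would establish these two identities first; everything afterwards is formal algebra.

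Next I would feed the kernel $P(x)=\frac{1}{x-z}$ into the Pearson equation. Unwinding the definitions of $U\mathcal{L}$ and $U^{\ast }=D_{-\omega }$, the relation $D_{\omega }(\phi \mathcal{L})+\psi \mathcal{L}=0$ is equivalent to $\langle \mathcal{L},\psi P-\phi \,D_{-\omega }P\rangle =0$ for all $P\in \mathbb{P}$, and this persists for the kernel by the term-by-term validity above. Since
\[
D_{-\omega ,x}\frac{1}{x-z}=-\frac{1}{(x-z)(x-z-\omega )}=-\frac{1}{\omega }\left( \frac{1}{x-z-\omega }-\frac{1}{x-z}\right) ,
\]
the Pearson identity becomes
\[
\left\langle \mathcal{L}_{x},\frac{\psi (x)}{x-z}\right\rangle +\frac{1}{\omega }\left( \left\langle \mathcal{L}_{x},\frac{\phi (x)}{x-z-\omega }\right\rangle -\left\langle \mathcal{L}_{x},\frac{\phi (x)}{x-z}\right\rangle \right) =0 .
\]
Applying the splitting to $g=\psi $ at $c=z$ and to $g=\phi $ at $c=z$ and $c=z+\omega $, the two $\phi$-terms combine into $D_{\omega }\!\big( \phi \,S_{\mathcal{L}}\big) (z)$ plus $\tfrac{1}{\omega }$ times a difference of the corresponding $\theta$-polynomials, i.e. a $D_{\omega }$ of a polynomial of degree $\leq \deg \phi -1$. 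The $D_{\omega }$-Leibniz rule $D_{\omega }(\phi S)=\phi \,D_{\omega }S+(D_{\omega }\phi )\,S(\cdot +\omega )$ together with $S_{\mathcal{L}}(z+\omega )=S_{\mathcal{L}}(z)+\omega D_{\omega }S_{\mathcal{L}}(z)$ then lets me isolate the coefficient of $D_{\omega }S_{\mathcal{L}}(z)$, and collecting the rest yields
\[
\phi (z)\,D_{\omega }S_{\mathcal{L}}(z)=a(z)\,S_{\mathcal{L}}(z)+b(z),
\]
with $a=-\big( \psi +D_{\omega }\phi \big) $ and $b$ equal to minus the sum of the two $\theta$-polynomial remainders (one from $\psi $, one from the $\phi$-difference).

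The degree bounds are then immediate from the class hypothesis: since $\mathcal{L}$ has class $s$ one may take $\deg \phi \leq s+2$ and $\deg \psi \leq s+1$, so $\deg a\leq \max \{\deg \psi ,\deg \phi -1\}\leq s+1$, while $b$ is a sum of polynomials of degrees $\leq \deg \psi -1\leq s$ and $\leq \deg \phi -2\leq s$, hence $\deg b\leq s$. The only genuine obstacle is the very first step — legitimising the substitution of the non-polynomial kernel $\frac{1}{x-z}$ into an identity that a priori holds only for $P\in \mathbb{P}$; this is handled once and for all by interpreting $S_{\mathcal{L}}$ and each $\langle \mathcal{L},g/(x-c)\rangle $ as formal Laurent series in $z^{-1}$ (resp.\ $c^{-1}$) and checking the two displayed identities coefficientwise. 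The one place inside the ensuing bookkeeping where care is needed is tracking the $\omega$-shifts in the Leibniz rule — whether the factor multiplying $D_{\omega }S_{\mathcal{L}}(z)$ is recorded as $\phi (z)$ or $\phi (z+\omega )$ before versus after invoking $S_{\mathcal{L}}(z+\omega )=S_{\mathcal{L}}(z)+\omega D_{\omega }S_{\mathcal{L}}(z)$, the discrepancy being absorbed into $a$ — but this does not affect the degree counts, which are the substantive content of the statement.
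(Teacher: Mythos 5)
The paper states this theorem without proof (it is imported from the $D_{\omega}$-semiclassical literature, cf.\ Maroni and Marcell\'an--Salto), so there is no in-paper argument to compare against. Your route --- evaluating the Pearson equation on the Cauchy kernel $\frac{1}{x-z}$, interpreted coefficientwise as a formal Laurent series, and using the splitting $g(x)=g(c)+(x-c)(\theta_{c}g)(x)$ --- is the standard derivation and is essentially sound: the two preliminary identities, the computation of $D_{-\omega,x}\frac{1}{x-z}$, and the degree counts $\deg a\leq\max\{\deg\psi,\deg\phi-1\}\leq s+1$ and $\deg b\leq\max\{\deg\psi-1,\deg\phi-2\}\leq s$ are all correct.

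The one genuine flaw is your final sentence, where you claim the $\phi(z)$-versus-$\phi(z+\omega)$ discrepancy ``is absorbed into $a$.'' It is not. Carrying your own computation to the end gives
\[
\phi(z+\omega)\,D_{\omega}S_{\mathcal{L}}(z)=-\bigl(\psi(z)+D_{\omega}\phi(z)\bigr)S_{\mathcal{L}}(z)-\Bigl(\left\langle \mathcal{L},\theta_{z}\psi\right\rangle +D_{\omega,z}\left\langle \mathcal{L},\theta_{z}\phi\right\rangle \Bigr),
\]
and replacing $\phi(z+\omega)$ by $\phi(z)$ on the left costs a term $\omega\,(D_{\omega}\phi)(z)\,D_{\omega}S_{\mathcal{L}}(z)$, which cannot in general be rewritten as a polynomial multiple of $S_{\mathcal{L}}(z)$ plus a polynomial. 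Concretely, for the Charlier case ($\omega=1$, $\phi=x$, $\psi=x-c$, $s=0$) the identity above reads $(z+1)\Delta S(z)=-(z+1-c)S(z)-\mu_{0}$, and matching the coefficients of $z^{-1},z^{-2},z^{-3}$ shows that no polynomials $a,b$ with $\deg a\leq1$, $\deg b\leq0$ satisfy $z\,\Delta S(z)=a(z)S(z)+b(z)$. So the honest conclusion of your argument is the displayed equation with $\phi(z+\omega)$ on the left (equivalently, with $S_{\mathcal{L}}(z+\omega)$ on the right if one insists on keeping $\phi(z)$ on the left). This does not affect the degree bounds, and the blame arguably lies with the loose wording of the theorem rather than with your method, but the absorption claim as written is false and should be replaced by the corrected form of the equation.
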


\section{Discrete semiclassical orthogonal polynomials}

In the sequel, we will consider linear functionals
\[
\left\langle \mathcal{L},P\right\rangle =\sum\limits_{x=0}^{\infty}%
P(x)\rho\left(  x\right)  ,
\]
for some positive weight function $\rho\left(  x\right)  $ supported on a
countable subset of the real line. With this choice, the Pearson equation
\[
\left\langle \Delta\left(  \phi\mathcal{L}\right)  +\psi\mathcal{L}%
,P\right\rangle =0,\quad P\in\mathbb{P},
\]
yields%
\begin{equation}
\Delta\left(  \phi\rho\right)  +\psi\rho=0. \label{Pearson}%
\end{equation}
We rewrite this equation as
\[
\frac{\rho\left(  x+1\right)  }{\rho\left(  x\right)  }=\frac{\phi\left(
x\right)  -\psi\left(  x\right)  }{\phi\left(  x+1\right)  }=\frac
{\lambda\left(  x\right)  }{\phi\left(  x+1\right)  },
\]
with%
\[
\phi\left(  x\right)  =x\left(  x+\beta_{1}\right)  \left(  x+\beta
_{2}\right)  \cdots\left(  x+\beta_{r}\right)  ,\quad r\leq s+2,
\]
and%
\[
\lambda\left(  x\right)  =c\left(  x+\alpha_{1}\right)  \left(  x+\alpha
_{2}\right)  \cdots\left(  x+\alpha_{l}\right)  ,\quad l\leq s+2.
\]

Since the Pochhammer symbol $\left(  \alpha\right)  _{x}$ defined by%
\[
\left(  \alpha\right)  _{x}=\frac{\Gamma\left(  \alpha+x\right)  }%
{\Gamma\left(  \alpha\right)  }, x\in\mathbb{N}_{0},
\]
satisfies the identity%
\[
\frac{\left(  \alpha\right)  _{x+1}}{\left(  \alpha\right)  _{x}}=x+\alpha,
\]
we obtain%
\[
\rho\left(  x\right)  =\frac{\left(  \alpha_{1}\right)  _{x}\cdots\left(
\alpha_{l}\right)  _{x}}{\left(  \beta_{1}+1\right)  _{x}\cdots\left(
\beta_{r}+1\right)  _{x}}\frac{c^{x}}{x!}.
\]
We will denote the orthogonal polynomials associated with $\rho\left(
x\right)  $ by%
\[
P_{n}^{(l,r)}\left(  x;\alpha_{1},\ldots,\alpha_{l},\beta_{1},\ldots,\beta
_{r};c\right)  .
\]

\subsection{\vspace{0in}Discrete classical polynomials}

When $s=0,$ we solve the Pearson equation (\ref{Pearson}) with $\deg\left(
\phi\right)  \leq2$, $\deg\left(  \psi\right)  = 1$ and three canonical cases
appear \cite{MR1149380}:

\begin{enumerate}
\item $P_{n}^{(0,0)}\left(  x;c\right)  $ Charlier polynomials

We have%
\begin{equation}
\rho\left(  x\right)  =\frac{c^{x}}{x!},\quad c>0,\quad x\in\mathbb{N}%
_{0},\label{Charlier}%
\end{equation}%
\begin{equation}
\phi\left(  x\right)  =x,\quad\psi\left(  x\right)
=x-c,\label{PearsonCharlier}%
\end{equation}
and%
\[
\lambda(x)=c.
\]

The Charlier polynomials have the hypergeometric representation
\cite{MR2656096}%
\begin{equation}
C_{n}\left(  x;c\right)  =\ _{2}F_{0}\left(  -n,-x;;-\frac{1}{c}\right)
\label{CharlierHyper}%
\end{equation}
and the monic Charlier polynomials $\widehat{C}_{n}\left(  x;c\right)  $
become%
\begin{equation}
\widehat{C}_{n}\left(  x;c\right)  =\left(  -c\right)  ^{n}C_{n}\left(
x;c\right)  . \label{CharlierMonic}%
\end{equation}

\item $P_{n}^{(1,0)}\left(  x;\alpha;c\right)  $ Meixner polynomials

We have
\begin{equation}
\rho\left(  x\right)  =\left(  \alpha\right)  _{x}\frac{c^{x}}{x!},\quad
\alpha>0,\quad0<c<1,\quad x\in\mathbb{N}_{0},\label{Meixner}%
\end{equation}%
\begin{equation}
\phi\left(  x\right)  =x,\quad\psi\left(  x\right)  =\left(  1-c\right)
x-c\alpha,\label{PearsonMeixner}%
\end{equation}
and%
\[
\lambda\left(  x\right)  =c\left(  x+\alpha\right)  .
\]

The Meixner polynomials have the hypergeometric representation
\cite{MR2656096}%
\begin{equation}
M_{n}\left(  x;\alpha,c\right)  =\ _{2}F_{1}\left(  -n,-x;\alpha;1-\frac{1}%
{c}\right)  \label{MeixnerHyper}%
\end{equation}
and the monic Meixner polynomials $\widehat{M}_{n}\left(  x;\alpha,c\right)  $
become%
\begin{equation}
\widehat{M}_{n}\left(  x;\alpha,c\right)  =\left(  \alpha\right)  _{n}\left(
\frac{c}{c-1}\right)  ^{n}M_{n}\left(  x;\alpha,c\right)  .
\label{MeixnerMonic}%
\end{equation}

If $\alpha=-N,$ we obtain the Krawtchouk polynomials $K_{n}\left(
x;N;c\right)  $ where%
\begin{equation}
\rho\left(  x\right)  =\left(  -N\right)  _{x}\frac{c^{x}}{x!},\quad c<0,\quad
N\in\mathbb{N},\quad x\in\ \left[  0,N\right]  ,\label{Krawtchouk}%
\end{equation}
and%
\begin{equation}
\phi\left(  x\right)  =x,\quad\psi\left(  x\right)  =\left(  1-c\right)
x+cN.\label{PearsonKraw}%
\end{equation}

\item $P_{n}^{(2,1)}\left(  x;\alpha, -N,\beta;1\right)  $ Hahn polynomials

We have%
\begin{equation}
\rho\left(  x\right)  =\frac{\left(  \alpha\right)  _{x}\left(  -N\right)
_{x}}{\left(  \beta+1\right)  _{x}}\frac{1}{x!},\quad x\in\ \left[
0,N\right]  ,\label{Hahn}%
\end{equation}
where $\alpha,\beta+1\notin\left[  -N,0\right]  ,$ $\alpha\left(
\beta+1\right)  <0,$ $N\in\mathbb{N},$%
\begin{equation}
\phi\left(  x\right)  =x\left(  x+\beta\right)  ,\quad\psi\left(  x\right)
=\left(  \beta-\alpha+N\right)  x+\alpha N,\label{PearsonHahn}%
\end{equation}
and%
\[
\lambda\left(  x\right)  =\left(  x+\alpha\right)  \left(  x-N\right)  .
\]
The Hahn polynomials have the hypergeometric representation \cite{MR2656096}%
\begin{equation}
Q_{n}\left(  x;\alpha,\beta,N\right)  =\ _{3}F_{2}\left(  -n,-x,n-1-\beta
+\alpha-N;\alpha,-N;1\right)  .\label{HahnHyper}%
\end{equation}

\end{enumerate}

\section{Discrete semi-classical polynomials of class 1}

When $s=1,$ we solve the Pearson equation (\ref{Pearson}) with $\deg\left(
\phi\right)  \leq3$, $\deg\left(  \psi\right)  \leq2,$ and obtain the
following canonical cases:

\begin{enumerate}
\item $P_{n}^{(0,1)}\left(  x;\beta;c\right)  $ Generalized Charlier
polynomials%
\begin{equation}
\rho\left(  x\right)  =\frac{1}{\left(  \beta+1\right)  _{x}}\frac{c^{x}}%
{x!},\quad x\in\mathbb{N}_{0},\label{GCharlier}%
\end{equation}
where $\beta>-1$ and $c>0.$

\item $P_{n}^{(1,1)}\left(  x;\alpha,\beta;c\right)  $ Generalized Meixner
polynomials
\begin{equation}
\rho\left(  x\right)  =\frac{\left(  \alpha\right)  _{x}}{\left(
\beta+1\right)  _{x}}\frac{c^{x}}{x!},\quad x\in\mathbb{N}_{0}%
,\label{GMeixner}%
\end{equation}
where $\alpha\left(  \beta+1\right)  >0$ and $c>0.$

\item $P_{n}^{(2,0)}\left(  x;\alpha,-N;1\right)  $ "Generalized Krawtchouk
polynomials"%
\[
\rho\left(  x\right)  =\left(  \alpha\right)  _{x}\left(  -N\right)  _{x}%
\frac{1}{x!},\quad x\in\ \left[  0,N\right]  ,
\]
where $\alpha<-N$ and $N\in\mathbb{N}.$

\item $P_{n}^{(2,1)}\left(  x;\alpha_{1},\alpha_{2};\beta;c\right)  $
"Generalized Hahn polynomials of type I"%
\begin{equation}
\rho\left(  x\right)  =\frac{\left(  \alpha_{1}\right)  _{x}\left(  \alpha
_{2}\right)  _{x}}{\left(  \beta+1\right)  _{x}}\frac{c^{x}}{x!},\quad
x\in\mathbb{N}_{0},\label{GHahnI}%
\end{equation}
where $\alpha_{1}\alpha_{2}\left(  \beta+1\right)  >0$ and $0<c<1.$

\item $P_{n}^{(3,2)}\left(  x;\alpha_{1},\alpha_{2},-N,\beta_{1},\beta
_{2},;1\right)  $ "Generalized Hahn polynomials of type II"%
\begin{equation}
\rho\left(  x\right)  =\frac{\left(  \alpha_{1}\right)  _{x}\left(  \alpha
_{2}\right)  _{x}\left(  -N\right)  _{x}}{\left(  \beta_{1}+1\right)
_{x}\left(  \beta_{2}+1\right)  _{x}}\frac{1}{x!},\quad x\in\ \left[
0,N\right]  \label{GHahnII}%
\end{equation}
where $\alpha_{1},\alpha_{2}\notin\left[  -N,0\right]  ,$ $\beta_{1},\beta
_{2}\notin\left[  -N-1,-1\right]  ,$ $\alpha_{1}\alpha_{2}\left(  \beta
_{1}+1\right)  \left(  \beta_{2}+1\right)  <0$ and $N\in\mathbb{N}.$
\end{enumerate}

In the next sections, we study these polynomials in detail.

\subsection{Generalized Charlier polynomials}

We have%
\[
\phi\left(  x\right)  =x\left(  x+\beta\right)  ,\quad\psi\left(  x\right)
=x^{2}+\beta x-c,
\]
and%
\[
\lambda\left(  x\right)  =c.
\]

The first moments are
\[
\mu_{0}=c^{-\frac{\beta}{2}}I_{\beta}\left(  2\sqrt{c}\right)  \Gamma\left(
\beta+1\right)  ,\quad\mu_{1}=c^{\frac{1-\beta}{2}}I_{\beta+1}\left(
2\sqrt{c}\right)  \Gamma\left(  \beta+1\right)  ,
\]
where $I_{\nu}\left(  z\right)  $ is the modified Bessel function of the first
kind \cite[10.25.2]{MR2723248}.

In \cite{MR1737084} Hounkonnou, Hounga and Ronveaux studied the semiclassical
polynomials associated with the weight function%
\begin{equation}
\rho_{r}\left(  x\right)  =\frac{c^{x}}{\left(  x!\right)  ^{r}},\quad
r=0,1,\ldots. \label{GenCharlier}%
\end{equation}
When $r=2,$ they derive the Laguerre-Freud equations for the recurrence
coefficients and a second-order difference equation.

In \cite{MR2063533} Van Assche and Foupouagnigni also consider
(\ref{GenCharlier}) with $r=2.$ They simplify the Laguerre-Freud equations
obtained in \cite{MR1737084} and get%
\begin{align*}
u_{n+1}+u_{n-1}  &  =\frac{1}{\sqrt{c}}\frac{nu_{n}}{1-u_{n}^{2}},\\
v_{n}  &  =\sqrt{c}u_{n+1}u_{n}%
\end{align*}
with $\gamma_{n}=c\left(  1-u_{n}^{2}\right)  $ and $\beta_{n}=v_{n}+n$. They
show that these equations are related to the discrete Painlev\'{e} II equation
dP$_{\mathrm{II}}$ \cite{MR2451211}
\[
x_{n+1}+x_{n-1}=\frac{\left(  an+b\right)  x_{n}+c}{1-x_{n}^{2}}.
\]
They also obtain the asymptotic behavior%
\[
\underset{n\rightarrow\infty}{\lim}\gamma_{n}=c,\quad\underset{n\rightarrow
\infty}{\lim}v_{n}=0,
\]
and conclude that the asymptotic zero distribution is given by the uniform
distribution on $[0,1]$, as is the case for the usual Charlier polynomials
\cite{MR1687531}.

In \cite{MR2926308}, Smet and Van Assche studied the orthogonal polynomials
associated with the weight function (\ref{GCharlier}). They obtained the
Laguerre-Freud equations
\begin{align}
\left(  a_{n+1}^{2}-c\right)  \left(  a_{n}^{2}-c\right)   &  =c\left(
b_{n}-n\right)  \left(  b_{n}-n+\beta\right)  ,\label{LFCharlier}\\
b_{n}+b_{n-1}  &  =n-1-\beta+\frac{cn}{a_{n}^{2}},\nonumber
\end{align}
for the orthonormal polynomials. They showed that these equations are are a
limiting case of the discrete Painlev\'{e} IV equation dP$_{\mathrm{IV}}$
\cite{MR2451211}%
\begin{align*}
x_{n+1}x_{n}  &  =\frac{\left(  y_{n}-\delta n-E\right)  ^{2}-A}{y_{n}^{2}%
-B},\\
y_{n}+y_{n-1}  &  =\frac{\delta n+E-\delta/2-C}{1+Dx_{n}}+\frac{\delta
n+E-\delta/2+C}{1+x_{n}/D}.
\end{align*}

Finally, in \cite{MR2944743} Filipuk and Van Assche related the system
\ref{LFCharlier} to the (continuous) fifth Painlev\'{e} equation
P$_{\mathrm{V}}$.

\subsection{Generalized Meixner polynomials}

We have%
\[
\phi\left(  x\right)  =x\left(  x+\beta\right)  ,\quad\psi\left(  x\right)
=x^{2}+\left(  \beta-c\right)  x-c\alpha
\]
and%
\[
\lambda\left(  x\right)  =c\left(  x+\alpha\right)  .
\]

The first moments are
\[
\mu_{0}=M\left(  \alpha,\beta+1;c\right)  ,\quad\mu_{1}=\frac{\alpha c}%
{\beta+1}M\left(  \alpha+1,\beta+2;c\right)  ,
\]
where $M\left(  a,b;z\right)  $ is the confluent hypergeometric function
\cite[13.2.2]{MR2723248}.

In \cite{MR842801} Ronveaux considered the semiclassical polynomials
associated with the weight function%
\[
\rho_{r}\left(  x\right)  =\prod\limits_{j=1}^{r}\left(  \alpha_{j}\right)
_{x}\ \frac{c^{x}}{\left(  x!\right)  ^{r}},\quad r=1,2,\ldots,
\]
and in \cite{MR1780044} he made some conjectures on the asymptotic behavior of
the recurrence coefficients.

In \cite{MR2926308}, Smet and Van Assche studied the orthogonal polynomials
associated with the weight function (\ref{GMeixner}). They obtained the
Laguerre-Freud equations
\begin{align}
\left(  u_{n}+v_{n}\right)  \left(  u_{n+1}+v_{n}\right)   &  =\frac{\alpha
-1}{c^{2}}v_{n}\left(  v_{n}-c\right)  \left(  v_{n}-c\frac{\alpha-1-\beta
}{\alpha-1}\right)  ,\label{LFMeixner}\\
\left(  u_{n}+v_{n}\right)  \left(  u_{n+1}+v_{n-1}\right)   &  =\frac{u_{n}%
}{u_{n}-\frac{cn}{\alpha-1}}\left(  u_{n}+c\right)  \left(  u_{n}%
+c\frac{\alpha-1-\beta}{\alpha-1}\right)  ,\nonumber
\end{align}
for the orthonormal polynomials, with
\begin{align*}
a_{n}^{2}  &  =cn-\left(  \alpha-1\right)  u_{n},\\
b_{n}  &  =n+\alpha+c-\beta-1-\frac{\alpha-1}{c}v_{n}.
\end{align*}
They also proved that the system (\ref{LFMeixner}) is a limiting case of the
asymmetric discrete Painlev\'{e} IV equation $\alpha-$dP$_{\mathrm{IV}}$
\cite{MR2451211}.

In \cite{MR2861208} Filipuk and Van Assche showed that the system
(\ref{LFMeixner}) can be obtained from the B\"{a}cklund transformation of the
fifth Painlev\'{e} equation P$_{\mathrm{V}}$. The particular case of
(\ref{GMeixner}) when $\beta=0$ was considered by Boelen, Filipuk, and Van
Assche in \cite{MR2749070}.

If we set $\alpha=-N,$ $N\in\mathbb{N}$ in (\ref{GMeixner}), we obtain%
\[
\rho\left(  x\right)  =\frac{\left(  -N\right)  _{x}}{\left(  \beta+1\right)
_{x}}\frac{c^{x}}{x!},
\]
where we now have $\beta>-1$ and $c<0.$ This case was analyzed by Boelen,
Filipuk, Smet, Van Assche, and Zhang in \cite{Krawtchouk}.

\subsubsection{Singular limits}

If we let $\alpha\rightarrow0$ and $\beta\rightarrow-1$ in (\ref{GMeixner}),
we have $\rho\left(  x\right)  \rightarrow\widetilde{\rho}\left(  x\right)  $
where $\widetilde{\rho}\left(  x\right)  $ is a new weight function satisfying
the Pearson equation
\begin{equation}
\Delta\left[  \left(  x-1\right)  x\widetilde{\rho}\right]  +\left[  x-\left(
c+1\right)  \right]  x\widetilde{\rho}=0. \label{LimitMeixner1}%
\end{equation}
Assuming that $\widetilde{\rho}\left(  x\right)  $ satisfies $x\widetilde{\rho
}(x)=xu(x),$ for some weight function $u(x),$ we get%
\begin{equation}
\Delta\left[  \left(  x-1\right)  xu\right]  +\left[  x-\left(  c+1\right)
\right]  xu=0 . \label{LimitMeixner2}%
\end{equation}

Using the product rule
\begin{equation}
\Delta\left(  fg\right)  =f\Delta g+g\Delta f+\Delta f\Delta g
\label{ProductRule}%
\end{equation}
in (\ref{LimitMeixner2}), we have%
\[
xu+\left(  x-1\right)  \Delta\left(  xu\right)  +\Delta\left(  xu\right)
+\left[  x-\left(  c+1\right)  \right]  xu=0,
\]
or%
\[
x\Delta\left(  xu\right)  +\left[  x-\left(  c+1\right)  +1\right]  xu=0.
\]
Dividing by $x,$ we obtain%
\[
\Delta\left(  xu\right)  +\left(  x-c\right)  u=0.
\]
Comparing with (\ref{PearsonCharlier}), we see that $u(x)$ is the weight
function corresponding to the Charlier polynomials (\ref{Charlier}), and
therefore (\ref{LimitMeixner1}) implies that%
\begin{equation}
\widetilde{\rho}\left(  x\right)  =\frac{c^{x}}{x!}+M\delta\left(  x\right)  ,
\label{DiracCharlier1}%
\end{equation}
where $\delta\left(  x\right)  $ is the Dirac delta function.

The orthogonal polynomials $P_{n}^{(1,1)}\left(  x;0,-1;c\right)  $ associated
with the weight function (\ref{DiracCharlier1}) were first studied by Chihara
in \cite{MR813269}. He showed that they satisfy a 3-term recurrence relation
(\ref{3term}) with
\[
b_{n}=c\frac{n}{n+1}\frac{D_{n}}{D_{n+1}}+\left(  n+1\right)  \frac{D_{n+1}%
}{D_{n}},
\]
and%
\[
\gamma_{n}=c\frac{n^{2}}{n+1}\frac{D_{n}^{2}}{D_{n-1}D_{n+1}},
\]
where%
\[
D_{n}=\frac{c^{n}}{n!}\frac{M}{e^{c}+MK_{n-1}},
\]
and%
\[
K_{n}=\sum\limits_{j=0}^{n}\frac{c^{j}}{j!},\quad K_{-1}=0.
\]
Note that in order that $D_{n}$ is well defined for all $n,$ we need $M>-1,$
since $K_{n}\nearrow e^{c}.$

In \cite{MR1327166}, Bavinck and Koekoek obtained a difference equation
satisfied by these polynomials and in \cite{MR1379116} \'{A}lvarez-Nodarse,
Garc\'{\i}a, and Marcell\'{a}n found the hypergeometric representation%
\[
P_{n}^{(1,1)}\left(  x;0;-1;c\right)  =\left(  -c\right)  ^{n}\ _{3}%
F_{1}\left(  -n,-x,1+\frac{x}{D_{n}};\frac{x}{D_{n}};-\frac{1}{c}\right)  .
\]
Since%
\[
\underset{z\rightarrow\infty}{\lim}\frac{\left(  1+z\right)  _{x}}{\left(
z\right)  _{x}}=1,
\]
we see that%
\[
\underset{M\rightarrow0}{\lim}P_{n}^{(1,1)}\left(  x;0,-1;c\right)
=\widehat{C}_{n}\left(  x;c\right)  ,
\]
where $\widehat{C}_{n}\left(  x;c\right)  $ is the monic Charlier polynomial
(\ref{CharlierMonic}).

\subsection{Generalized Krawtchouk polynomials}

We have%
\[
\phi\left(  x\right)  =x,\quad\psi\left(  x\right)  =-c\left[  x^{2}+\left(
\alpha-N-\frac{1}{c}\right)  x-\alpha N\right]
\]
and%
\[
\lambda\left(  x\right)  =c\left(  x+\alpha\right)  \left(  x-N\right)  .
\]

The first moments are%
\[
\mu_{0}=C_{N}\left(  -\alpha;-\frac{1}{c}\right)  ,\quad\mu_{1}=-c\alpha
NC_{N-1}\left(  -\alpha-1;-\frac{1}{c}\right)  ,
\]
where $C_{n}\left(  x;a\right)  $ is the Charlier polynomial
(\ref{CharlierHyper}).

To our knowledge, these polynomials have not appeared before in the literature.

\subsection{Generalized Hahn polynomials of type I}

We have%
\[
\phi\left(  x\right)  =x\left(  x+\beta\right)  ,
\]%
\[
\psi\left(  x\right)  =\left(  1-c\right)  x^{2}+\left(  \beta-c\alpha
_{1}-c\alpha_{2}\right)  x-c\alpha_{1}\alpha_{2},
\]
and%
\[
\lambda\left(  x\right)  =c\left(  x+\alpha_{1}\right)  \left(  x+\alpha
_{2}\right)  .
\]

The first moments are%
\begin{align*}
\mu_{0}  &  =\ _{2}F_{1}\left(  \alpha_{1},\alpha_{2};\beta+1,c\right)  ,\\
\mu_{1}  &  =c\frac{\alpha_{1}\alpha_{2}}{\beta+1}\ _{2}F_{1}\left(
\alpha_{1}+1,\alpha_{2}+1;\beta+2,c\right)  ,
\end{align*}
$\ $where $_{2}F_{1}\left(  a,b;c;z\right)  $ is the hypergeometric function.

\subsubsection{Singular limits}

If we let $\alpha_{2}\rightarrow0,\beta\rightarrow-1$ and $\alpha_{1}=\alpha$
in (\ref{GHahnI}), we have $\rho\left(  x\right)  \rightarrow\widetilde{\rho
}\left(  x\right)  $ where $\widetilde{\rho}\left(  x\right)  $ is a new
weight function satisfying the Pearson equation
\begin{equation}
\Delta\left[  \left(  x-1\right)  x\widetilde{\rho}\right]  +\left[  \left(
1-c\right)  x-\left(  1+c\alpha\right)  \right]  x\widetilde{\rho}=0.
\label{LimitHahnI1}%
\end{equation}
Assuming that $\widetilde{\rho}\left(  x\right)  $ satisfies $x\widetilde{\rho
}(x)=xu(x),$ for some weight function $u(x),$ we get%
\begin{equation}
\Delta\left[  \left(  x-1\right)  xu\right]  +\left[  \left(  1-c\right)
x-\left(  1+c\alpha\right)  \right]  xu=0. \label{LimitHahnI2}%
\end{equation}

Using the product rule (\ref{ProductRule}) in (\ref{LimitHahnI2}), we have%
\[
xu+\left(  x-1\right)  \Delta\left(  xu\right)  +\Delta\left(  xu\right)
+\left[  \left(  1-c\right)  x-\left(  1+c\alpha\right)  \right]  xu=0,
\]
or%
\[
x\Delta\left(  xu\right)  +\left[  \left(  1-c\right)  x-\left(
1+c\alpha\right)  +1\right]  xu=0.
\]
Dividing by $x,$ we obtain%
\[
\Delta\left(  xu\right)  +\left[  \left(  1-c\right)  x-c\alpha\right]  u=0.
\]
Comparing with (\ref{PearsonMeixner}), we see that $u(x)$ is the weight
function corresponding to the Meixner polynomials (\ref{Meixner}), and
therefore (\ref{LimitMeixner1}) implies that%
\begin{equation}
\widetilde{\rho}\left(  x\right)  =\left(  \alpha\right)  _{x}\frac{c^{x}}%
{x!}+M\delta\left(  x\right)  . \label{DiracMeixner}%
\end{equation}

The orthogonal polynomials associated with the weight function
(\ref{DiracMeixner}) were first studied by Chihara in \cite{MR813269}. He
showed that they satisfy a 3-term recurrence relation (\ref{3term}) with
\[
b_{n}=\frac{c\left(  \alpha+n\right)  }{c-1}\frac{n}{n+1}\frac{B_{n}}{B_{n+1}%
}+\frac{n+1}{c-1}\frac{B_{n+1}}{B_{n}},
\]
and%
\[
\gamma_{n}=\frac{c}{\left(  c-1\right)  ^{2}}\frac{n^{2}\left(  \alpha
+n\right)  }{n+1}\frac{B_{n}^{2}}{B_{n-1}B_{n+1}},
\]
where%
\[
B_{n}=\frac{c^{n}\ \left(  \alpha\right)  _{n}}{\left(  1-c\right)  n!}%
\frac{M}{\left(  1-c\right)  ^{-\alpha}+MK_{n-1}},
\]
and%
\[
K_{n}=\sum\limits_{j=0}^{n}\left(  \alpha\right)  _{j}\ \frac{c^{j}}{j!},\quad
K_{-1}=0.
\]
Note that in order that $B_{n}$ is well defined for all $n,$ we need $M>-1,$
since $K_{n}\nearrow\left(  1-c\right)  ^{-\alpha}.$

In \cite{MR1270212}, Richard Askey proposed the problem of finding a
second-order difference equation satisfied by these polynomials. The problem
was solved by Bavinck and van Haeringen in \cite{MR1281521}, and in
\cite{MR1379116} \'{A}lvarez-Nodarse, Garc\'{\i}a and Marcell\'{a}n found the
hypergeometric representation%
\[
P_{n}^{(2,1)}\left(  x;\alpha,0,-1;c\right)  =\left(  \alpha\right)
_{n}\left(  \frac{c}{c-1}\right)  ^{n}\ _{3}F_{2}\left(  -n,-x,1+\frac
{x}{B_{n}};\alpha,\frac{x}{B_{n}};1-\frac{1}{c}\right)  .
\]
In this case,
\[
\underset{M\rightarrow0}{\lim}P_{n}^{(2,1)}\left(  x;\alpha,0,-1;c\right)
=\widehat{M}_{n}\left(  x;\alpha,c\right)  ,
\]
where $\widehat{M}_{n}\left(  x;\alpha,c\right)  $ is the monic Meixner
polynomial (\ref{MeixnerMonic}).

If $\alpha_{1}=-N,$ $N\in\mathbb{N},$ we can remove the restriction that
$0<c<1$ and take any $c<0,$ with $\alpha_{2}\notin\left[  -N,0\right]  ,$
$\beta\notin\left[  -N-1,-1\right]  ,$ and $\alpha_{2}\left(  \beta+1\right)
>0$. If we let $\alpha_{2}\rightarrow-\left(  N-1\right)  $ and $\beta
\rightarrow-N$, we have $\rho\left(  x\right)  \rightarrow\widetilde{\rho
}\left(  x\right)  $ where $\widetilde{\rho}\left(  x\right)  $ is a new
weight function satisfying the Pearson equation%
\[
\psi\left(  x\right)  =\left(  1-c\right)  x^{2}+\left(  \beta-c\alpha
_{1}-c\alpha_{2}\right)  x-c\alpha_{1}\alpha_{2},
\]%
\begin{equation}
\Delta\left[  x\left(  x-N\right)  \widetilde{\rho}\right]  +\left[  \left(
1-c\right)  x+c\left(  N-1\right)  \right]  \left(  x-N\right)
\widetilde{\rho}=0.\label{LimitHahnI3}%
\end{equation}
Assuming that $\widetilde{\rho}\left(  x\right)  $ satisfies $\left(
x-N\right)  \widetilde{\rho}(x)=\left(  x-N\right)  u(x),$ for some weight
function $u(x),$ we get%
\begin{equation}
\Delta\left[  x\left(  x-N\right)  u\right]  +\left[  \left(  1-c\right)
x+c\left(  N-1\right)  \right]  \left(  x-N\right)  u=0.\label{LimitHahnI4}%
\end{equation}

Using the product rule (\ref{ProductRule}) in (\ref{LimitHahnI4}), we have%
\[
xu+\left(  x-N+1\right)  \Delta\left(  xu\right)  +\left[  \left(  1-c\right)
x+c\left(  N-1\right)  \right]  \left(  x-N\right)  u=0,
\]
or%
\[
\left(  x-N+1\right)  \Delta\left(  xu\right)  +\left(  x-N+1\right)  \left(
x+Nc-cx\right)  u=0.
\]
Dividing by $x-N+1,$ we obtain%
\[
\Delta\left(  xu\right)  +\left[  \left(  1-c\right)  x+cN\right]  u=0.
\]
Comparing with (\ref{PearsonKraw}), we see that $u(x)$ is the weight function
corresponding to the Krawtchouk polynomials (\ref{Krawtchouk}), and therefore
(\ref{LimitHahnI3}) implies that%
\[
\widetilde{\rho}\left(  x\right)  =\left(  -N\right)  _{x}\frac{c^{x}}%
{x!}+M\delta\left(  x-N\right)  .
\]

\subsection{Generalized Hahn polynomials of type II}

We have%
\[
\phi\left(  x\right)  =x\left(  x+\beta_{1}\right)  \left(  x+\beta
_{2}\right)  ,
\]%
\begin{align*}
\psi\left(  x\right)   &  =\left(  \beta_{1}+\beta_{2}-\alpha_{1}-\alpha
_{2}+N\right)  x^{2}\\
&  +\left(  \beta_{1}\beta_{2}+\alpha_{1}N-\alpha_{1}\alpha_{2}+\alpha
_{2}N\right)  x+\alpha_{1}\alpha_{2}N,
\end{align*}
and%
\[
\lambda\left(  x\right)  =\left(  x+\alpha_{1}\right)  \left(  x+\alpha
_{2}\right)  .
\]

The first moments are%
\begin{align*}
\mu_{0}  &  =\ _{3}F_{2}\left(  \alpha_{1},\alpha_{2},-N;\beta_{1}+1,\beta
_{2}+1,1\right)  ,\\
\mu_{1}  &  =-\frac{\alpha_{1}\alpha_{2}N}{\left(  \beta_{1}+1\right)  \left(
\beta_{2}+1\right)  }\ _{3}F_{2}\left(  \alpha_{1}+1,\alpha_{2}+1,-N+1;\beta
_{1}+2,\beta_{2}+2,1\right)  ,
\end{align*}
where $_{3}F_{2}\left(  a_{1},a_{2},a_{3};b_{1},b_{2};z\right)  $ is the
hypergeometric function.

To our knowledge, these polynomials have not appeared before in the literature.

\subsubsection{Singular limits}

If we let $\alpha_{2}\rightarrow0,\beta_{2}\rightarrow-1,$ $\alpha_{1}%
=\alpha,$ $\beta_{1}=\beta,$ $\alpha\left(  \beta+1\right)  <0$ in
(\ref{GHahnII}), we have $\rho\left(  x\right)  \rightarrow\widetilde{\rho
}\left(  x\right)  $ where $\widetilde{\rho}\left(  x\right)  $ is a new
weight function satisfying the Pearson equation
\begin{equation}
\Delta\left[  \left(  x-1\right)  \left(  x+\beta\right)  x\widetilde{\rho
}\right]  +\left[  \left(  \beta-1-\alpha+N\right)  x+\alpha N-\beta\right]
x\widetilde{\rho}=0.\label{LimitHahnII}%
\end{equation}
Assuming that $\widetilde{\rho}\left(  x\right)  $ satisfies $x\widetilde{\rho
}(x)=xu(x),$ for some weight function $u(x),$ we get%
\begin{equation}
\Delta\left[  \left(  x-1\right)  \left(  x+\beta\right)  xu\right]  +\left[
\left(  \beta-1-\alpha+N\right)  x+\alpha N-\beta\right]
xu=0.\label{LimitHahnIII1}%
\end{equation}

Using the product rule (\ref{ProductRule}) in (\ref{LimitHahnIII1}), we have%
\[
\left(  x+\beta\right)  xu+x\Delta\left[  \left(  x+\beta\right)  xu\right]
+\left[  \left(  \beta-1-\alpha+N\right)  x+\alpha N-\beta\right]  xu=0,
\]
or%
\[
x\Delta\left[  \left(  x+\beta\right)  xu\right]  +\left[  \left(
\beta-\alpha+N\right)  x+\alpha N\right]  xu=0.
\]
Dividing by $x,$ we obtain%
\[
\Delta\left[  \left(  x+\beta\right)  xu\right]  +\left[  \left(  \beta
-\alpha+N\right)  x+\alpha N\right]  u=0.
\]
Comparing with (\ref{PearsonHahn}), we see that $u(x)$ is the weight function
corresponding to the Hahn polynomials (\ref{Hahn}), and therefore
(\ref{LimitHahnII}) implies that%
\begin{equation}
\widetilde{\rho}\left(  x\right)  =\frac{\left(  \alpha\right)  _{x}\left(
-N\right)  _{x}}{\left(  \beta+1\right)  _{x}}\frac{1}{x!}+M\delta\left(
x\right)  . \label{DiracHahn}%
\end{equation}

Similarly, if we let $\alpha_{2}\rightarrow-\left(  N-1\right)  ,\beta
_{2}\rightarrow-N,$ $\alpha_{1}=\alpha,$ $\beta_{1}=\beta,$ $\alpha\left(
\beta+1\right)  <0$ in (\ref{GHahnII}), we have $\rho\left(  x\right)
\rightarrow\widetilde{\rho}\left(  x\right)  $ where $\widetilde{\rho}\left(
x\right)  $ is a new weight function satisfying the Pearson equation
\begin{equation}
\Delta\left[  x\left(  x+\beta\right)  \left(  x-N\right)  \widetilde{\rho
}\right]  +\left[  \left(  \beta-\alpha+N-1\right)  x+\alpha\left(
N-1\right)  \right]  \left(  x-N\right)  \widetilde{\rho}%
=0.\label{LimitHahnIII}%
\end{equation}
Assuming that $\widetilde{\rho}\left(  x\right)  $ satisfies $\left(
x-N\right)  \widetilde{\rho}(x)=\left(  x-N\right)  u(x),$ for some weight
function $u(x),$ we get
\begin{equation}
\Delta\left[  x\left(  x+\beta\right)  \left(  x-N\right)  u\right]  +\left[
\left(  \beta-\alpha+N-1\right)  x+\alpha\left(  N-1\right)  \right]  \left(
x-N\right)  u=0.\label{LimitHahnIII2}%
\end{equation}

Using the product rule (\ref{ProductRule}) in (\ref{LimitHahnIII2}), we have%
\begin{gather*}
\left(  x+\beta\right)  xu+\left(  x-N+1\right)  \Delta\left[  \left(
x+\beta\right)  xu\right] \\
+\left[  \left(  \beta-\alpha+N-1\right)  x+\alpha\left(  N-1\right)  \right]
\left(  x-N\right)  u=0,
\end{gather*}
or%
\begin{gather*}
\left(  x-N+1\right)  \Delta\left[  \left(  x+\beta\right)  xu\right] \\
+\left(  x-N+1\right)  \left[  \left(  \beta-\alpha+N\right)  x+\alpha
N\right]  u=0.
\end{gather*}
Dividing by $x-N+1,$ we obtain%
\[
\Delta\left[  \left(  x+\beta\right)  xu\right]  +\left[  \left(  \beta
-\alpha+N\right)  x+\alpha N\right]  u=0.
\]
\strut

Comparing with (\ref{PearsonHahn}), we see that $u(x)$ is the weight function
corresponding to the Hahn polynomials (\ref{Hahn}), and therefore
(\ref{LimitHahnIII}) implies that%
\begin{equation}
\widetilde{\rho}\left(  x\right)  =\frac{\left(  \alpha\right)  _{x}\left(
-N\right)  _{x}}{\left(  \beta+1\right)  _{x}}\frac{1}{x!}+M\delta\left(
x-N\right)  . \label{DiracHahn1}%
\end{equation}
The orthogonal polynomials associated with the weight functions
(\ref{DiracHahn}) and (\ref{DiracHahn1}) were first studied by
\'{A}lvarez-Nodarse and Marcell\'{a}n in \cite{MR1410602}.

\section{Limit relations between polynomials}

From the two identities \cite{MR2656096}%
\[
\underset{\lambda\rightarrow\infty}{\lim}\left(  \lambda\alpha\right)
_{x}\left(  \frac{z}{\lambda}\right)  ^{x}=\left(  \alpha z\right)  ^{x}%
\]
and
\[
\underset{\lambda\rightarrow\infty}{\lim}\frac{1}{\left(  \lambda
\alpha\right)  _{x}}\left(  \lambda z\right)  ^{x}=\left(  \frac{z}{\alpha
}\right)  ^{x},
\]
the following limit relations follow:

\begin{enumerate}
\item Generalized Hahn polynomials of type II to generalized Hahn polynomials
of type I%
\[
\underset{N\rightarrow\infty}{\lim}P_{n}^{(3,2)}\left(  x;\alpha_{1}%
,\alpha_{2},\beta,-\frac{N}{c},N;1\right)  =P_{n}^{(2,1)}\left(  x;\alpha
_{1},\alpha_{2},\beta;c\right)  .
\]

\item Generalized Hahn polynomials of type I to generalized Krawtchouk
polynomials%
\[
\underset{\beta\rightarrow\infty}{\lim}P_{n}^{(2,1)}\left(  x;\alpha
,-N,\beta;c\beta\right)  =P_{n}^{(2,0)}\left(  x;\alpha,-N;c\right)  .
\]

\item Generalized Hahn polynomials of type I to generalized Meixner
polynomials%
\[
\underset{\alpha_{2}\rightarrow\infty}{\lim}P_{n}^{(2,1)}\left(
x;\alpha,\alpha_{2},\beta;\frac{c}{\alpha_{2}}\right)  =P_{n}^{(1,1)}\left(
x;\alpha,\beta;c\right)  .
\]

\item Generalized Meixner polynomials to generalized Charlier polynomials%
\[
\underset{\alpha\rightarrow\infty}{\lim}P_{n}^{(1,1)}\left(  x;\alpha
,\beta;\frac{c}{\alpha}\right)  =P_{n}^{(0,1)}\left(  x;\beta;c\right)  .
\]

\item Generalized Meixner polynomials to Meixner polynomials%
\[
\underset{\beta\rightarrow\infty}{\lim}P_{n}^{(1,1)}\left(  x;\alpha
,\beta;c\beta\right)  =M_{n}\left(  x;\alpha;c\right)  .
\]

\item Generalized Charlier polynomials to Charlier polynomials
\[
\underset{\beta\rightarrow\infty}{\lim}P_{n}^{(0,1)}\left(  x;\beta
;c\beta\right)  =C_{n}\left(  x;c\right)  .
\]

\end{enumerate}

We also have the singular limits:

\begin{enumerate}
\item Generalized Meixner polynomials to Charlier-Dirac polynomials%
\[
\underset{%
\begin{array}
[c]{c}%
\alpha\rightarrow0\\
\beta\rightarrow-1
\end{array}
}{\lim}P_{n}^{(1,1)}\left(  x;\alpha,\beta;c\right)  =C_{n}\left(  x;c\right)
\oplus\delta{\small (x),}%
\]

\item Generalized Hahn polynomials of type I to Hahn polynomials%
\[
\underset{%
\begin{array}
[c]{c}%
\alpha_{2}\rightarrow-N\\
c\rightarrow1
\end{array}
}{\lim}P_{n}^{(2,1)}\left(  x;\alpha,\alpha_{2},\beta;c\right)  =Q_{n}\left(
x;\alpha,\beta,N\right)  ,
\]

\item Generalized Hahn polynomials of type I to Meixner-Dirac polynomials%
\[
\underset{%
\begin{array}
[c]{c}%
\alpha_{2}\rightarrow0\\
\beta\rightarrow-1
\end{array}
}{\lim}P_{n}^{(2,1)}\left(  x;\alpha,\alpha_{2},\beta;c\right)  =M_{n}\left(
x;\alpha;c\right)  \oplus\delta{\small (x),}%
\]

\item Generalized Hahn polynomials of type I to Krawtchouk-Dirac polynomials%
\[
\underset{%
\begin{array}
[c]{c}%
\alpha_{2}\rightarrow-N+1\\
\beta\rightarrow-N
\end{array}
}{\lim}P_{n}^{(2,1)}\left(  x;-N,\alpha_{2},\beta;c\right)  =K_{n}\left(
x;-N;c\right)  \oplus\delta{\small (x-N),}%
\]

\item Generalized Hahn polynomials of type II to Hahn-Dirac polynomials%
\[
\underset{%
\begin{array}
[c]{c}%
\alpha_{2}\rightarrow0\\
\beta_{2}\rightarrow-1
\end{array}
}{\lim}P_{n}^{(3,2)}\left(  x;\alpha,\alpha_{2}, -N, \beta,\beta_{2};1\right)
=Q_{n}\left(  x;\alpha,\beta,N\right)  \oplus\delta{\small (x),}%
\]

\item Generalized Hahn polynomials of type II to Hahn-Dirac polynomials%
\[
\underset{%
\begin{array}
[c]{c}%
\alpha_{2}\rightarrow-N+1\\
\beta_{2}\rightarrow-N
\end{array}
}{\lim}P_{n}^{(3,2)}\left(  x;\alpha,\alpha_{2},-N,\beta,\beta_{2};1\right)
=Q_{n}\left(  x;\alpha,\beta,N\right)  \oplus\delta{\small (x-N),}%
\]

\end{enumerate}

where we use the notation "$\oplus\ \delta\left(  x-x_{0}\right)  "$ to denote
the addition of a delta function to the measure of orthogonality at the point
$x_{0}$.

We can summarize these results in the following scheme:

\begin{center}
$%
\begin{array}
[c]{ccccc}%
P_{n}^{(3,2)} & \rightarrow & \text{{\small Hahn}}\oplus\delta{\small (x)} &
\text{{\small Hahn}}\oplus\delta{\small (x-N)} & \\
\downarrow &  & \downarrow & \downarrow & \\
P_{n}^{(2,1)} & \rightarrow & \text{{\small Meixner}}\oplus\delta{\small (x)}
& \text{{\small Krawtchouk}}\oplus\delta{\small (x-N)} & \text{{\small Hahn}%
}\\
\downarrow & \searrow & \searrow & \downarrow & \downarrow\\
P_{n}^{(2,0)} & P_{n}^{(1,1)} & \rightarrow & \text{{\small Charlier}}%
\oplus\delta{\small (x)} & \\
& \downarrow & \searrow &  & \\
& P_{n}^{(0,1)} &  &  & \text{{\small Meixner / Krawtchouk}}\\
&  & \searrow &  & \downarrow\\
&  &  &  & \text{{\small Charlier}}%
\end{array}
$
\end{center}

\section{Concluding remarks}

We have described the discrete semiclassical orthogonal polynomials of class
$s=1$ using the different choices for the polynomials in the canonical Pearson
equation that the corresponding linear functional satisfies. We have centered
our attention when the linear functional has a representation in terms of a
discrete positive measure supported on a countable subset of the real line.
Some new families of orthogonal polynomials appear as well as some families of
orthogonal polynomials (generalized Charlier, generalized Krawtchouk, and
generalized Meixner) which have attracted the interest of researchers in the
last years taking into account the connection of the coefficients of the three
term recurrence relations with discrete and continuous Painlev\'{e} equations.
We have also studied limit relations between such families of orthogonal
polynomials having in mind an analogue of the Askey tableau for classical
orthogonal polynomials. It would be very interesting to find the equations
satisfied by the coefficients of the three term recurrence relations for the
above new sequences of semiclassical orthogonal polynomials. Furthermore, an
analysis of the class $s=2$ will also be welcome in order to get a complete
classification of such a class as well as to check if new families of
orthogonal polynomials appear as in the case of the $D$-semiclassical
orthogonal polynomials pointed out in \cite{MR2914891}.\newline

\section{Acknowledgements}

The work of the second author (FM) has been supported by Direcci\'on General
de Investigaci\'on Cient\'ifica y T\'ecnica, Ministerio de Econom\'ia y
Competitividad of Spain, grant MTM2012-36732-C03-01.\newline

\end{document}